\documentclass[11pt,reqno]{amsart}
\usepackage{amsmath,amssymb,amsthm, epsfig}
\usepackage{hyperref}
\usepackage[mathscr]{euscript}

\usepackage{comment}

\usepackage{xcolor}
\usepackage[nameinlink]{cleveref}
\hypersetup{
	colorlinks,
	linkcolor={blue!50!black},
	citecolor={blue!50!black},
	urlcolor={blue!50!black}
}

\usepackage{enumerate}
\usepackage{mathrsfs}
\usepackage{esint}

\newtheorem{theorem}{Theorem}
\newtheorem{definition}{Definition}
\newtheorem{lemma}{Lemma}
\newtheorem{proposition}{Proposition}

\newtheorem{remark}{Remark}

\date{}
\numberwithin{equation}{section}
\numberwithin{theorem}{section}
\numberwithin{lemma}{section}
\numberwithin{corollary}{section}
\numberwithin{remark}{section} 
\numberwithin{proposition}{section}
\numberwithin{definition}{section}

\newcommand{\R}{\mathbb{R}}
\newcommand{\supp}{\operatorname{supp } }
\newcommand{\diam}{\operatorname{diam}}

\newcommand{\loc}{\operatorname{loc}}

\def\XXint#1#2#3{{\setbox0=\hbox{$#1{#2#3}{\int}$ }
		\vcenter{\hbox{$#2#3$ }}\kern-.6\wd0}}

\begin{document}

\subjclass[2020]{35B65, 35R09; 46N20, 60G52}





\keywords{Sharp regularity, nonlocal equations, fractional Laplacian, Morrey spaces.}

\title[Sharp regularity for the anisotropic fractional Laplacian]{Sharp regularity for the anisotropic fractional Laplacian with Morrey data}

\author[R. Teymurazyan]{Rafayel Teymurazyan}
\address{Applied Mathematics and Computational Sciences (AMCS), Compu\-ter, Electrical and Mathematical Sciences and Engineering Division (CEMSE), King Abdullah University of Science and Technology (KAUST), Thuwal, 23955-6900, Kingdom of Saudi Arabia}{}
\email{rafayel.teymurazyan@kaust.edu.sa} 

\author[J.M.~Urbano]{Jos\'{e} Miguel Urbano}
\address{Applied Mathematics and Computational Sciences (AMCS), Compu\-ter, Electrical and Mathematical Sciences and Engineering Division (CEMSE), King Abdullah University of Science and Technology (KAUST), Thuwal, 23955-6900, Kingdom of Saudi Arabia; and CMUC, Department of Mathematics, University of Coimbra, 3000-143 Coimbra, Portugal}{} 
\email{miguel.urbano@kaust.edu.sa} 

\begin{abstract}
We obtain sharp regularity for the anisotropic fractional Laplace equation with a right-hand side belonging to a Morrey space. We derive local $C^{\sigma+\lambda-n}-$regularity of solutions when $\sigma+\lambda-n\neq1$, where $\sigma\in(0,2)$ is the order of the operator and $\lambda\in(n-\sigma,n)$ is the Morrey decay rate of the right-hand side. In the case $\sigma+\lambda-n=1$, we show that solutions are locally in the Zygmund class, and hence are Log-Lipschitz continuous. Explicit examples attest to the sharpness of the conditions.
\end{abstract}

\maketitle

\section{Introduction}
We study regularity for anisotropic fractional Laplace equations when the right-hand side has low integrability. More precisely, we consider equations of the form
\begin{equation}\label{main equation}
    Lu=f\quad\text{ in }\quad B_1,
\end{equation}
where
\begin{equation}\label{definition of L}
    Lu(x):=\text{P.V.}\int_{\R^n}\frac{u(x)-u(x+y)}{|y|^{n+\sigma}}a\left(\frac{y}{|y|}\right)\,dy.
\end{equation}
Here $\sigma\in(0,2)$ and $a\in L^\infty(\mathbb{S}^{n-1})$ is such that 
\begin{equation}\label{conditions on a}
    a(\theta)=a(-\theta)\quad\text{ and }\quad0<\Lambda\le a(\theta)\le\Lambda^{-1}<\infty,
\end{equation}
for a constant $\Lambda$. Equations of this type arise naturally in the study of symmetric stable L\'evy processes, and their regularity theory has been studied extensively due to a wide range of applications in probability, physics, finance, etc. (see, for example, \cite{CS09, CS11, CTU20, RO16, ROS16} and the references therein). In a generator of an $n-$dimensional pure jump L\'evy process, the kernel measures the frequency of jumps in the direction $y$. When $a(\cdot)$ is constant, the operator $L$ is a positive multiple of the fractional Laplacian. For $f\in L^\infty$, bounded weak solutions of \eqref{main equation} are known to be in $C^\sigma$, whenever $\sigma\neq1$ and in $C^{\sigma-\varepsilon}$, for any $\varepsilon>0$, when $\sigma=1$ (see \cite{ROS16}).

In contrast, we assume that $f$ is merely an integrable function satisfying a certain growth condition on its integrals over balls, i.e., it belongs to a suitable Morrey space
$$
f\in L^{1,\lambda}(B_1),\qquad n-\sigma<\lambda<n,
$$
meaning 
$$
\int_{B_r}|f|\lesssim r^\lambda.
$$
This assumption is substantially weaker than boundedness. It allows singular right-hand sides, excluding only concentrations that are too strong at small scales. The parameter $\lambda$ measures the local mass growth of $f$ and determines the amount of regularity that can be recovered from \eqref{main equation}. 

In \cite{F19}, it was shown that solutions of \eqref{main equation} are locally of class $C^{\sigma+\lambda-n-\varepsilon}$, for every sufficiently small $\varepsilon>0$, provided $\sigma+\lambda-n<1$. In this work, we go a step further, removing the $\varepsilon-$loss and deriving the corresponding optimal $C^{\sigma+\lambda-n}-$regularity of solutions. Moreover, for $\sigma+\lambda-n=1$, in \cite{F19} solutions were shown to be of class $C_{\loc}^{\beta}$, for any $\beta\in(0,1)$. We also improve this case, showing that the solutions are locally in the Zygmund class, which, in particular, implies log-Lipschitz regularity. For $\sigma+\lambda-n>1$, \cite[Corollary 6]{F19} already establishes the full $C^{\sigma+\lambda-n}_{\loc}-$regularity. Our direct potential argument uses properties of the kernel to recover the full regularity in all three regimes. An example shows that the assumption on $\lambda$ cannot be relaxed. The result is consistent with the corresponding result in the local framework, derived for the Poisson equation for uniformly elliptic divergence form operators in \cite{DiF92}, which, in turn, was recently extended to a broader class of operators in divergence form in \cite{DiFTU26}. 

This note is structured as follows. \Cref{s2} presents the precise mathematical setting and some known results for future reference. \Cref{s3} establishes regularity for the homogeneous equation. The main result is proved in \Cref{s4}, while \Cref{s5} confirms the sharpness of the conditions.

\section{Preliminaries}\label{s2}
In this section, we fix notation, recall definitions, and list several known results for future reference. In particular, we gather several properties of the potential kernel of the symmetric $\sigma-$stable process generated by the anisotropic fractional Laplacian. 

\subsection{Notation and definitions} Hereafter, $n\ge2$ and $B_r(x)$ is the ball of radius $r$ centered at $x$; we use $B_r:=B_r(0)$. With $a_{-}$ we denote any number strictly less than $a$. Throughout the paper, we use
\begin{equation}\label{definition of alpha}
    \alpha:=\sigma+\lambda-n,
\end{equation}
and
$$
C^\gamma:=C^{k,\gamma-k},\quad\text{where}\quad k=\lfloor\gamma\rfloor,\quad\gamma\ge0.
$$
We also use the Zygmund seminorm notation
$$
[u]_{*(B_r)}:=\sup_{\substack{\xi\neq0\\
    x,\,x\pm\xi\in B_r}}\frac{|u(x+\xi)+u(x-\xi)-2u(x)|}{|\xi|}.
$$
For $\sigma\in(0,2)$, we define
    $$
    L_\sigma:=\left\{u\in L^1_{\loc}(\R^n);\,\|u\|_{L_\sigma}:=\int_{\R^n}\frac{|u(y)|}{1+|y|^{n+\sigma}}\,dy<\infty
    \right\}.
    $$
Solutions of equations are understood in the distributional or weak sense, according to the definition below.
\begin{definition}
    A function $u\in L_\sigma$ is called a distributional solution of \eqref{main equation}, if 
    $$    \int_{\R^n}u(x)L\varphi(x)\,dx=\int_{B_1}f(x)\varphi(x)\,dx,\quad\forall\varphi\in C_0^\infty(B_1).
    $$
    If, in addition, $u\in H^{\sigma/2}(B_1)$, we call it a weak solution. 
\end{definition}
For the comparison principle for $L$ and further details, we refer the reader to, for example, \cite{CS09, PST25, S24, T26}. 
\begin{definition}
Let $\Omega\subset\R^n$ be a bounded domain with Lipschitz boundary, and let $\lambda\ge0$. The Morrey space $L^{1,\lambda}(\Omega)$ is the set of functions $f\in L^1(\Omega)$ for which
$$
\|f\|_{L^{1,\lambda}(\Omega)}
:=
\sup_{\substack{x\in\Omega\\0<r<\diam(\Omega)}}\left(
r^{-\lambda}\int_{\Omega\cap B_r(x)}|f(y)|\,dy\right)<\infty.
$$
\end{definition}
\begin{remark}
    Clearly, $L^{1,0}(\Omega)=L^1(\Omega)$. Furthermore, $L^{1,n}(\Omega)=L^\infty(\Omega)$ and $L^{1,\lambda}(\Omega)=\{0\}$, for $\lambda>n$ (see, for example, \cite[Theorem 5.5.1]{PKJF13}). For further properties of Morrey spaces, we refer the interested reader to \cite{M66, SDH1, SDH2}.
\end{remark}
\subsection{The potential kernel}
The heat kernel associated with $L$ is defined by
$$
p(x,t):=\frac{1}{(2\pi)^n}\int_{\R^n}e^{ix\cdot\xi}e^{-t\psi(\xi)}\,d\xi,\quad t>0,
$$
where
$$
\psi(\xi):=\int_{\R^n}\frac{(1-\cos(y\cdot\xi))}{|y|^{n+\sigma}}a\left(\frac{y}{|y|}\right)\,dy=c_\sigma\int_{\mathbb{S}^{n-1}}|\xi\cdot\theta|^\sigma a(\theta)\,d\theta,
$$
with
$$
c_\sigma:=\int_0^\infty\frac{1-\cos t}{t^{1+\sigma}}\,dt>0.
$$
In particular,
$$
    c|\xi|^\sigma\le\psi(\xi)\le C|\xi|^\sigma.
$$
Then the potential kernel, or fundamental solution for $L$, is 
$$
\Phi(x):=\int_0^\infty p(x,t)\,dt,\quad x\neq0,
$$
as
$$
L\Phi=\delta_0,
$$
in the distributional sense. Since
$$
\psi(r\xi)=r^\sigma\psi(\xi),\quad r>0,
$$
the potential kernel satisfies
\begin{equation}\label{Gamma is homogeneous}
    \Phi(rx)=r^{\sigma-n}\Phi(x),\quad r>0.
\end{equation}
The following result is from \cite[Theorem 1.1]{S10} (see also \cite{BS07}).
\begin{theorem}\label{t2.1}
    For every multiindex $\beta$ with $|\beta|<2\sigma$, there exists a constant $C>0$, depending only on $\beta$, $\sigma$, $\Lambda$ and $n$, such that
    \begin{equation}\label{derivative estimate}
        |D^\beta\Phi(x)|\le C|x|^{\sigma-n-|\beta|}.
    \end{equation}
    Moreover, if $2\sigma$ is not an integer, then $\Phi\in C_{\loc}^{2\sigma}(\R^n\setminus\{0\}).$
    If $2\sigma$ is an integer, then $\Phi\in C_{\loc}^{2\sigma_-}(\R^n\setminus\{0\})$.
\end{theorem}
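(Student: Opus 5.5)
Since $\Phi$ is homogeneous of degree $\sigma-n$ by \eqref{Gamma is homogeneous}, $D^\beta\Phi$ is homogeneous of degree $\sigma-n-|\beta|$ wherever it exists. So \eqref{derivative estimate} reduces to showing that $\Phi$ is of class $C^{2\sigma}$ (resp.\ $C^{2\sigma_-}$) in a neighbourhood of the unit sphere, with bounds depending only on $n,\sigma,\Lambda$. The plan has three steps: a crude first bound from the heat kernel, then an elliptic bootstrap that gains $\sigma$ derivatives, and finally an explanation of why the gain stops at order $2\sigma$.

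\emph{Step 1: a first bound from the heat kernel.} Using $\psi(r\xi)=r^\sigma\psi(\xi)$ one gets the scaling $p(x,t)=t^{-n/\sigma}p(t^{-1/\sigma}x,1)$. From $c|\xi|^\sigma\le\psi\le C|\xi|^\sigma$ and the Fourier formula,
$$|D^\beta p(x,t)|\le C\,t^{-(n+|\beta|)/\sigma},$$
so the part $\int_1^\infty p(x,t)\,dt$ is $C^\infty$. This uses $(n+|\beta|)/\sigma>1$, which holds since $n\ge2>\sigma$. For $t\in(0,1)$ and $|x|\sim1$, split the Lévy measure as $\nu=\nu_1+\nu_2$, where $\nu_1$ is the part of $|y|^{-n-\sigma}a(y/|y|)\,dy$ on $|y|<1/8$. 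This gives
$$p_t=p^{(1)}_t*e^{-t|\nu_2|}\sum_{k\ge0}\frac{t^k\nu_2^{*k}}{k!}.$$
The factor $p^{(1)}_t$ decays faster than any power away from the origin, and $\nu_2$ has a bounded density. This yields $|p(x,t)|\le Ct$ near $|x|=1$, and also Hölder bounds of order $<\sigma$, obtained by putting derivatives or differences on $p^{(1)}_t$ and integrating $t\cdot t^{-\gamma/\sigma}$ over $(0,1)$. Hence $\Phi\in C^{\sigma_-}$ near the sphere.

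\emph{Step 2: elliptic bootstrap.} Fix $|x_0|=1$ and take a cutoff $\eta$ equal to $1$ on $B_{1/4}(x_0)$ and supported in $B_{1/2}(x_0)$. Since $L\Phi=0$ away from the origin,
$$L(\eta\Phi)=g:=-L((1-\eta)\Phi)\quad\text{in }B_{1/4}(x_0),\qquad g(x)=-\int(1-\eta(z))\Phi(z)K(z-x)\,dz,$$
where $K(y)=|y|^{-n-\sigma}a(y/|y|)$. Interior estimates for translation-invariant operators with kernel satisfying \eqref{conditions on a} (as in \cite{ROS16}) give: if $g\in C^\gamma$ then $\eta\Phi\in C^{\gamma+\sigma}$, for $\gamma+\sigma$ not an integer. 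Estimating the regularity of $g$ is the main obstacle. Because $a$ is only $L^\infty$, $K$ is not smooth in $x$, so derivatives must fall on $(1-\eta)\Phi$, after the change of variables $z\mapsto z+x$. This has two parts.
\begin{itemize}
\item Away from the origin, $(1-\eta)\Phi$ has whatever regularity has already been established, which initially is $C^{\sigma_-}$. Transferring it gives $g\in C^{\sigma_-}$ and hence $\Phi\in C^{2\sigma_-}$ near the sphere.
\item Near the origin, $\Phi\sim|z|^{\sigma-n}$ is integrable but not smooth. Translating this singular part costs regularity, and its contribution to $g$ must be bounded through the homogeneity of $\Phi$ together with the $L^1$ modulus of continuity of $K$ away from the origin.
\end{itemize}

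\emph{Step 3: the $2\sigma$ threshold and sharpening.} The interplay just described, a singular piece of $\Phi$ at the origin against only bounded angular dependence of $K$, is why the bootstrap cannot proceed past order $2\sigma$. I would check that the singular piece contributes to $g$ exactly $C^{\sigma}$ regularity and no more. In the non-integer case, redoing Step 2 with $\gamma=\sigma$ exactly, instead of $\sigma_-$, using the improved information from the first pass, yields $\Phi\in C^{2\sigma}$. When $2\sigma$ is an integer, the Schauder gain fails at integer order and one keeps only $C^{2\sigma_-}$. Combining with homogeneity gives \eqref{derivative estimate} for $|\beta|<2\sigma$.
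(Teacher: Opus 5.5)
The paper does not prove this statement; it imports it from \cite[Theorem 1.1]{S10}, so your proposal has to stand on its own.

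\textbf{The gap in Step 2.} The homogeneity reduction is fine, but Step 2 does not close as written. You defer the step you call the main obstacle (``I would check\ldots''), and the tool you name for it cannot work.

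Near $x_0\in\mathbb{S}^{n-1}$ the singular contribution is $g_2(x)=\int(\chi\Phi)(z)K(z-x)\,dz$, with $\chi$ a cutoff near the origin. Since $a$ is only in $L^\infty(\mathbb{S}^{n-1})$, the $L^1$ modulus of continuity of $K$ on $\{|y|\ge 1/4\}$ tends to zero at no prescribed rate and is not controlled by $\Lambda$. So nothing routed through it gives a H\"older exponent, let alone a constant depending only on $n,\sigma,\Lambda$.

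The translation has to fall entirely on $\chi\Phi$. Write $g_2(x)=\int(\chi\Phi)(x+y)K(y)\mathbf{1}_{\{|y|\ge 1/4\}}\,dy$, so that, for an integer $k>\sigma$,
$|\Delta_h^kg_2(x)|\le\|K\mathbf{1}_{\{|y|\ge1/4\}}\|_{L^\infty}\|\Delta_h^k(\chi\Phi)\|_{L^1(\R^n)}$.
Bound the last norm by homogeneity:
\begin{itemize}
\item on $B_{2k|h|}$, use $\int_{B_r}|\Phi|\lesssim r^\sigma$;
\item off $B_{2k|h|}$, use $|\Delta_h^k\Phi(z)|\lesssim|h|^\gamma|z|^{\sigma-n-\gamma}$, valid once $\Phi\in C^\gamma$ near the sphere with $\gamma\le k$.
\end{itemize}
This yields $\|\Delta_h^k(\chi\Phi)\|_{L^1}\lesssim|h|^{\min\{\sigma,\gamma\}}$ for $\gamma\ne\sigma$.

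A first pass with $\gamma<\sigma$ gives $g\in C^\gamma$ and $\Phi\in C^{\sigma+\gamma}$ near the sphere. In the second pass $\gamma>\sigma$, so $g$ lies in the H\"older--Zygmund class of order $\sigma$. Then \cite{ROS16}, which also needs $\eta\Phi\in C^\gamma(\R^n)$ (supplied by the previous pass), gives $C^{2\sigma}$ when $2\sigma\notin\mathbb{Z}$ and $C^{2\sigma_-}$ otherwise. That computation is the real content of your route, and the proposal does not contain it.

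\textbf{Step 1 already suffices.} Pushed through, Step 1 proves the whole statement with no Schauder theory. The integral $\int_0^1t\cdot t^{-\gamma/\sigma}\,dt$ you write down converges for every $\gamma<2\sigma$, not just $\gamma<\sigma$.

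By the scaling $p(x,t)=t^{-n/\sigma}p(t^{-1/\sigma}x,1)$, it suffices to prove $|\partial^\beta p(y,1)|\le C_\beta(1+|y|)^{-n-\sigma}$ for all $\beta$. Your small/large-jump splitting gives this with truncation at level $1$: all derivatives go on the smooth, rapidly decaying small-jump factor, and only the bounded density and $|y|^{-n-\sigma}$ tail of $\nu_2$ are used. With a fixed truncation at $1/8$ and $t\to0$, the bound $\|\partial^\beta p^{(1)}_t\|_{L^1}\lesssim t^{-|\beta|/\sigma}$ that you implicitly need is not automatic; scaling the truncation with $t^{1/\sigma}$ makes it so.

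Hence $|\partial^\beta p(x,t)|\le C\min\{t^{-(n+|\beta|)/\sigma},\,t^{1-|\beta|/\sigma}|x|^{-n-\sigma}\}$. Two consequences follow:
\begin{itemize}
\item $\partial^\beta\Phi=\int_0^\infty\partial^\beta p\,dt$ converges exactly when $|\beta|<2\sigma$, which gives the derivative bound.
\item Split the time integral at $t=|h|^\sigma$ in $D^k\Phi(x+h)-D^k\Phi(x)$, with $k=\lfloor2\sigma\rfloor$. This gives $|h|^{2\sigma-k}$ when $2\sigma\notin\mathbb{Z}$. When $2\sigma\in\mathbb{Z}$ the borderline integral $\int_0t^{-1}\,dt$ diverges, which is exactly the dichotomy in the statement.
\end{itemize}

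This heat-kernel route explains the threshold $2\sigma$ directly and is the natural one in the literature the paper cites. Your elliptic route can be completed as above, but it is longer and relies on the interior theory of \cite{ROS16}.
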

\begin{remark}\label{r2.2}
    Observe that for $0<\delta<\min\{1,2\sigma\}$, whenever
    $$
    |x-y|\le\frac{1}{2}|x|,
    $$
    \Cref{t2.1} and \eqref{Gamma is homogeneous} yield
    $$
    |\Phi(x)-\Phi(y)|\le C|x-y|^\delta|x|^{\sigma-n-\delta}.
    $$
\end{remark}
The proof of the following result can be found in \cite[Corollary 4]{F19}.
\begin{proposition}\label{p2.1}
    Let $\sigma\in(0,2)$, $0<r<R$ and $f\in L^{1,\lambda}(B_R)$, where $n-\sigma<\lambda<n$. If $u\in H^{\sigma/2}(B_R)\cap L_\sigma$ is a weak solution of
    $$
    Lu=f\quad\text{in}\quad B_R,
    $$
    where $L$ is defined by \eqref{definition of L} with kernels satisfying \eqref{conditions on a}, then
    $$
    \|u\|_{C^\gamma(B_r)}\le C\left(\|u\|_{L^2(B_R)}+\|u\|_{L_\sigma}+\|f\|_{L^{1,\lambda}(B_R)}\right),
    $$
    for any $0<\gamma<\min\{1,\sigma+\lambda-n\}$. Here $C>0$ depends only on $\sigma$, $\lambda$, $\Lambda$, $\gamma$, $r$, $R$ and $n$.
\end{proposition}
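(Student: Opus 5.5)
The strategy is to split $u$ into a potential part, which carries the right-hand side, and an $L$-harmonic remainder. Fix $\rho\in(r,R)$ and set $g:=f\chi_{B_\rho}$, extended by zero outside $B_\rho$. Define the Newtonian-type potential
$$
w(x):=\int_{\R^n}\Phi(x-y)g(y)\,dy .
$$
\begin{enumerate}[(i)]
\item By \eqref{derivative estimate} with $\beta=0$, we have $|\Phi(x)|\le C|x|^{\sigma-n}$. Since $\sigma-n>-n$ and $g\in L^1$, this makes $w\in L^1_{\loc}$.
\item For $|x|\ge 2\rho$ the same bound gives $|w(x)|\le C\|g\|_{L^1}|x|^{\sigma-n}$, so $w\in L_\sigma$.
\item Since $L\Phi=\delta_0$ distributionally and $L$ is symmetric (as $a$ is even), Fubini gives $\int w\,L\varphi=\int g\varphi$ for every $\varphi\in C_0^\infty(B_\rho)$.
\end{enumerate}
Hence $v:=u-w\in L_\sigma$ is a distributional solution of $Lv=0$ in $B_\rho$.

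The core estimate is a Hölder bound on $w$ that uses only the Morrey norm. Let $x,z\in B_r$ and $d:=|x-z|$. Split $\R^n$ into the near region $B_{2d}(x)$ and its complement.
\begin{itemize}
\item \emph{Near region.} Estimate $|\Phi(x-y)|$ and $|\Phi(z-y)|$ separately by $|x-y|^{\sigma-n}$ and $|z-y|^{\sigma-n}$, both integrated over $B_{3d}$ around the respective point. Decompose into dyadic annuli of radius $2^{-k}d$ and use $\int_{B_s}|g|\le \|f\|_{L^{1,\lambda}}s^\lambda$. This gives a bound $C\|f\|_{L^{1,\lambda}}\sum_k (2^{-k}d)^{\sigma-n+\lambda}\le C\|f\|_{L^{1,\lambda}}d^{\alpha}$, and the sum converges because $\alpha>0$.
\item \emph{Far region.} Here $|x-y|\ge 2d$, so \Cref{r2.2} applies with some $\delta\in(0,\min\{1,2\sigma\})$. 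It gives $|\Phi(x-y)-\Phi(z-y)|\le Cd^\delta|x-y|^{\sigma-n-\delta}$. Summing over annuli of radius $2^kd$ up to scale $\sim\rho$ yields $Cd^\delta\sum_k(2^kd)^{\alpha-\delta}$.
\end{itemize}
In the far-region sum, choose $\delta$ as follows.
\begin{itemize}
\item If $\alpha<\min\{1,2\sigma\}$, take $\delta>\alpha$. The sum is then dominated by its first term, and we obtain $Cd^\alpha$.
\item Otherwise, take $\delta\ge\gamma$ arbitrary. The sum is at most $C\rho^{\alpha-\delta}$ (or a logarithm when $\delta=\alpha$), which gives $Cd^\delta\lesssim d^\gamma$ for $d\le1$.
\end{itemize}
In all cases $[w]_{C^\gamma(B_r)}\le C\|f\|_{L^{1,\lambda}(B_R)}$ for every $\gamma<\min\{1,\alpha\}$. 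The same annular computation with $z$ removed bounds $\|w\|_{L^\infty(B_\rho)}$ and $\|w\|_{L^2(B_\rho)}$ by $C\|f\|_{L^{1,\lambda}}$. Together with (ii), this also bounds $\|w\|_{L_\sigma}$ by $C\|f\|_{L^{1,\lambda}}$.

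For the remainder $v$, I would use interior regularity for $L$-harmonic functions. Mollifying, $v_\varepsilon=v*\eta_\varepsilon$ solves $Lv_\varepsilon=0$ in $B_{\rho-\varepsilon}$ classically, because $L$ is translation invariant. The known interior estimates for translation-invariant stable-like operators (Ros-Oton--Serra) then give
$$
\|v_\varepsilon\|_{C^{1}(B_r)}\le C\left(\|v_\varepsilon\|_{L^\infty(B_{\rho'})}+\|v_\varepsilon\|_{L_\sigma}\right)
$$
for some $r<\rho'<\rho$. The $L^\infty$ term is controlled by $\|v_\varepsilon\|_{L^2(B_\rho)}+\|v_\varepsilon\|_{L_\sigma}$ through a local boundedness estimate for subsolutions. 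Letting $\varepsilon\to0$ yields $\|v\|_{C^\gamma(B_r)}\le C(\|v\|_{L^2(B_\rho)}+\|v\|_{L_\sigma})$. Finally, $\|v\|_{L^2}\le\|u\|_{L^2}+\|w\|_{L^2}$ and $\|v\|_{L_\sigma}\le\|u\|_{L_\sigma}+\|w\|_{L_\sigma}$, and each $w$-term is bounded by $C\|f\|_{L^{1,\lambda}}$ by the previous paragraph. Adding the estimates for $v$ and $w$ proves the claim.

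I expect the main obstacle to be this homogeneous step. The difficulty is passing from purely distributional information on $v$, together with only $L^2$ and tail control, to a pointwise interior bound that is uniform in the anisotropy $a$, which is merely bounded and measurable. I would try to deduce this from a De Giorgi/Moser local boundedness estimate for weak solutions, applied to $v_\varepsilon$, which lies in $H^{\sigma/2}_{\loc}$, rather than proving it from scratch.
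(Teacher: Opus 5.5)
The paper does not prove this proposition. It imports it from \cite[Corollary 4]{F19}, and then uses it (with $f=0$, followed by an absorption of the $L^2$ term) to prove \Cref{important lemma on homogeneous}. Your argument is therefore best compared with the proof of \Cref{main result}.

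\textbf{Potential part.} This is essentially Step 1 and Case 1 of that proof. The near-region bound is \Cref{auxiliary lemma} with $m=0$, and the far region uses \Cref{r2.2} with $\delta>\alpha$. Your additional branch $\delta\in[\gamma,1)$ for $\alpha\ge1$ is correct. It avoids the Hessian bounds on $\Phi$ that the paper needs in Cases 2--3, since you only aim below exponent $1$.

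\textbf{Homogeneous part.} This is the genuinely different ingredient. You could not use \Cref{important lemma on homogeneous}, because that would be circular. Replacing it by a De Giorgi-type local boundedness estimate with tail, plus the $L^\infty$-data estimate of \cite{ROS16}, is a legitimate, non-circular route. The local boundedness estimate applies because $\Lambda\le a\le\Lambda^{-1}$ makes the kernel two-sided comparable to $|y|^{-n-\sigma}$. Its tail is controlled by $\|v_\varepsilon\|_{L_\sigma}\le C\|v\|_{L_\sigma}$, as in \eqref{weighted norm estimate}. As a by-product, for $\alpha<1$ your argument gives the endpoint exponent $\gamma=\alpha$ directly, with the $L^2$ term still on the right-hand side.

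\textbf{One claim must be corrected.} The interior bound $\|v_\varepsilon\|_{C^1(B_r)}\le C(\|v_\varepsilon\|_{L^\infty(B_{\rho'})}+\|v_\varepsilon\|_{L_\sigma})$ is not what \cite{ROS16} provides when $\sigma\le1$ and $a$ is merely bounded.
\begin{itemize}
\item After a cutoff, $\eta v_\varepsilon$ solves $L(\eta v_\varepsilon)=F_\varepsilon:=-L((1-\eta)v_\varepsilon)$ in an inner ball. Uniformly in $\varepsilon$, you only know $\|F_\varepsilon\|_{L^\infty}\le C\|v\|_{L_\sigma}$.
\item Hölder continuity of $F_\varepsilon$ would require a modulus of continuity of $a$ on $\mathbb S^{n-1}$, since it involves differences $K(z-x)-K(z-x')$.
\item The higher estimate $C^{\sigma+\beta}$ in \cite{ROS16} requires $C^\beta$ data and a global $C^\beta$ bound on the solution.
\end{itemize}
What you actually obtain is $C^{\sigma}$ for $\sigma\neq1$ and $C^{\sigma-\epsilon}$ for $\sigma=1$. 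For $\sigma>1$ this contains your $C^1$ claim; for $\sigma\le 1$ it does not. It is nevertheless enough, because $\gamma<\min\{1,\alpha\}$ and $\alpha=\sigma+\lambda-n<\sigma$. So state the homogeneous estimate only for exponents below $\min\{1,\sigma\}$; with that change your proof goes through.
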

The next result is from \cite[Corollary 6]{F19}.
\begin{proposition}\label{p2.2}
    Let $\sigma\in(1,2)$, $0<r<R$ and $f\in L^{1,\lambda}(B_R)$, where $n-\sigma+1<\lambda<n$. If $u\in H^{\sigma/2}(B_R)\cap L_\sigma$ is a weak solution of
    $$
    Lu=f\quad\text{in}\quad B_R,
    $$
    where $L$ is defined by \eqref{definition of L} with kernels satisfying \eqref{conditions on a}, then
    $$
    \|u\|_{C^{\sigma+\lambda-n}(B_r)}\le C\left(\|u\|_{L^2(B_R)}+\|u\|_{L_\sigma}+\|f\|_{L^{1,\lambda}(B_R)}\right),
    $$    
    where $C>0$ depends only on $\sigma$, $\lambda$, $\Lambda$, $r$, $R$ and $n$.
\end{proposition}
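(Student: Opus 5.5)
The plan is to split $u$ into a Riesz-type potential of the data plus an $L$-harmonic remainder. The potential carries the sharp regularity, which I will read off from the kernel bounds of \Cref{t2.1}. The remainder is smooth in the interior.

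\textbf{Step 1 (the potential).} Fix $r<\rho<R$ and a cutoff $\eta\in C_0^\infty(B_R)$ with $\eta\equiv1$ on $B_\rho$. Set $g:=\eta f$; then $g\in L^{1,\lambda}(\R^n)$ with $\|g\|_{L^{1,\lambda}}\le C\|f\|_{L^{1,\lambda}(B_R)}$, and $g$ has compact support. Define $w:=\Phi*g$, so that $Lw=g$ in the distributional sense because $L\Phi=\delta_0$. Put $\alpha=\sigma+\lambda-n\in(1,\sigma)$. Since $\sigma>1$, we have $2<2\sigma$, so \eqref{derivative estimate} applies with $|\beta|\le2$:
$$
|\nabla\Phi(x)|\le C|x|^{\sigma-n-1},\qquad |D^2\Phi(x)|\le C|x|^{\sigma-n-2}.
$$
A dyadic decomposition together with the Morrey bound gives, for every $x$ and $\delta>0$,
$$
\int_{B_\delta(x)}|x-y|^{\sigma-n-1}|g(y)|\,dy\le C\sum_{k\ge0}(2^{-k}\delta)^{\sigma-n-1}(2^{-k}\delta)^{\lambda}\le C\delta^{\alpha-1}\|g\|_{L^{1,\lambda}}.
$$
Here the exponent $\alpha-1>0$, which is exactly where $\lambda>n-\sigma+1$ is used. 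Hence $\nabla w=\nabla\Phi*g$ is well defined and bounded, and differentiation under the integral is justified. Likewise $w$ is bounded, using $\alpha>0$.

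\textbf{Step 2 (Hölder continuity of $\nabla w$).} For $x,z$ with $\delta=|x-z|$, I split the integral into the region $B_{2\delta}(x)$ and its complement. On the near region, both $|\nabla\Phi(x-y)|$ and $|\nabla\Phi(z-y)|$ are integrated over balls of radius at most $3\delta$ around $x$ and $z$. By Step 1 this contributes at most $C\delta^{\alpha-1}$. On the far region $|x-y|\ge2\delta$, the mean value theorem and the $D^2\Phi$ bound give
$$
|\nabla\Phi(x-y)-\nabla\Phi(z-y)|\le C\delta|x-y|^{\sigma-n-2}.
$$
A dyadic sum over the annuli then gives
$$
\int_{|x-y|\ge2\delta}|x-y|^{\sigma-n-2}|g|\le C\delta^{\alpha-2},
$$
which converges because $\alpha<2$; large scales are harmless since $g$ has compact support. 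The far contribution is therefore also $C\delta^{\alpha-1}$. Thus
$$
[\nabla w]_{C^{\alpha-1}(\R^n)}+\|w\|_{C^1}\le C\|f\|_{L^{1,\lambda}(B_R)}.
$$
Since $w$ decays like $|x|^{\sigma-n}$ at infinity, we also get $\|w\|_{L_\sigma}+\|w\|_{L^2(B_R)}\le C\|f\|_{L^{1,\lambda}}$.

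\textbf{Step 3 (the remainder).} Let $h:=u-w$. Then $h\in L_\sigma$ and $Lh=0$ in $B_\rho$ in the distributional sense. Moreover $h\in H^{\sigma/2}_{\loc}(B_\rho)$, because $w$ is $C^{1}$, so $h$ is a weak solution. Interior regularity for $L$-harmonic functions with bounded measurable symmetric $a$ (for instance \cite{ROS16} applied with zero right-hand side, or a bootstrap via incremental quotients) gives $h\in C^{\sigma_-}(B_r)$, hence $C^{1,\alpha-1}(B_r)$ since $\alpha<\sigma$. The bound is
$$
\|h\|_{C^{\alpha}(B_r)}\le C\bigl(\|h\|_{L^2(B_\rho)}+\|h\|_{L_\sigma}\bigr).
$$
By Step 2 the right-hand side is controlled by $\|u\|_{L^2(B_R)}+\|u\|_{L_\sigma}+\|f\|_{L^{1,\lambda}(B_R)}$. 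Adding the estimates for $h$ and $w$ yields the claim.

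I expect the main obstacle to be Step 3. The issue is not the potential estimate, which is a clean computation, but making sure that a purely distributional $L$-harmonic $h\in L_\sigma$ enjoys interior estimates of order strictly above $\alpha$, with constants depending only on $L^2$ and tail norms. The fact that $a$ is merely $L^\infty$ is not a problem, because translation invariance lets one difference-quotient $h$ repeatedly. A secondary technical point is justifying $L(\Phi*g)=g$ for $g$ only in $L^{1,\lambda}$; I would handle it by approximating $g$ with mollifications and passing to the limit using the uniform bounds of Step 1.
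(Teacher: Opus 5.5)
Your proof is correct, but the paper never proves \Cref{p2.2}; it imports it from \cite[Corollary 6]{F19}, so the real comparison is with the paper's later machinery. Your Steps 1--2 follow Case 3 of the proof of \Cref{main result}, specialised to $1<\alpha<2$, essentially line by line: the splitting into $\Phi*g$ plus an $L$-harmonic remainder, \Cref{auxiliary lemma} with $m=1$ for the near field of $D\Phi$, and the mean value bound $|D^2\Phi|\le C|x|^{\sigma-n-2}$ summed over dyadic annuli for the far field.

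The genuine difference is how you treat the remainder. In the paper, the interior $C^\gamma$ estimate for $L$-harmonic functions with $\gamma\in(1,\sigma)$ (\Cref{important lemma on homogeneous}) is obtained by applying \Cref{p2.2} itself with zero right-hand side. So the paper's machinery cannot prove \Cref{p2.2} without circularity. You break the loop by taking the homogeneous estimate from \cite{ROS16}, which is the right move, but the citation needs a localisation you only allude to. Since \cite{ROS16} concerns solutions bounded on all of $\R^n$, you first need $\|h\|_{L^\infty(B_{\rho'})}\le C(\|h\|_{L^2(B_\rho)}+\|h\|_{L_\sigma})$. This is the nonlocal De Giorgi bound, available because $h$ is a weak solution and the kernel is comparable to $|y|^{-n-\sigma}$. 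You then apply the bounded-right-hand-side case of \cite{ROS16}, not the zero-right-hand-side case, to $\chi h$ for a cutoff $\chi$. In the interior, $L(\chi h)=-L((1-\chi)h)$ is bounded by $C\|h\|_{L_\sigma}$ but in general no better. The reason is that $a(y/|y|)|y|^{-n-\sigma}$ is discontinuous away from the origin when $a$ is only $L^\infty$. Since $\sigma\neq1$, this yields $C^\sigma\subset C^\alpha$ on $B_r$ with the stated dependence of constants.

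The same discontinuity is why your alternative ``bootstrap via incremental quotients'' does not work as stated. The tail of a difference quotient of $h$ is not controlled by $\|h\|_{L_\sigma}$, so the resulting bounds are not uniform in the increment.

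Compared with the paper's citation, your route gives a self-contained, non-circular derivation. It makes explicit the roles of $\lambda>n-\sigma+1$ (near-field convergence for $D\Phi$) and $\lambda<n$ (convergence of the far-field dyadic sum). The price is reliance on the $C^\sigma$ theory of \cite{ROS16} and on local boundedness for weak solutions.
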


\section{Regularity for the homogeneous equation}\label{s3}

In this section, we show that distributional solutions of the homogeneous equation are in $H^{\sigma/2}_{\loc}(B_1)\cap C_{\loc}^{\gamma}(B_1)$, for any non-integer $\gamma\in(0,\sigma)$. \Cref{p2.1} establishes local $C^\gamma-$regularity for weak solutions in $H^{\sigma/2}_{\loc}(B_1)\cap L_\sigma$. Below, we extend it to the whole class of solutions in $L_\sigma$. In particular, we impose no a priori local Sobolev regularity on solutions. Instead, we work with distributional solutions in $L_\sigma$, and recover local energy regularity as a consequence. More precisely, we show that if $w\in L_\sigma$ is a distributional solution of $Lw=0$ in $B_1$, then $w\in H^{\sigma/2}_{\loc}(B_1)$. 
\begin{lemma}\label{important lemma on homogeneous}
    If $w\in L_\sigma$ solves 
    $$
    Lw=0\quad\text{in}\quad B_1
    $$
    in the sense of distributions, where $L$ is defined by \eqref{definition of L} with kernels satisfying \eqref{conditions on a}, then for $r\in(0,1)$ and $0<\gamma<\sigma$, $\gamma\neq1$, there exists $C>0$, depending only on $\gamma$, $r$, $\sigma$, $\Lambda$ and $n$, such that $w\in H^{\sigma/2}_{\loc}(B_1)\cap C_{\loc}^{\gamma}(B_1)$, and
    $$
    \|w\|_{C^\gamma(B_r)}\le C\|w\|_{L_\sigma}.
    $$
\end{lemma}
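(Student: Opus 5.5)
We will prove the lemma by mollification: first reduce to smooth solutions, then pass to the limit using the fact that the estimate only involves the $L_\sigma$ norm. Fix a radially symmetric mollifier $\eta_\varepsilon$ supported in $B_\varepsilon$ and set $w_\varepsilon:=w*\eta_\varepsilon$. Since $L$ is translation invariant and has constant coefficients, for every $\varphi\in C_0^\infty(B_{1-\varepsilon})$ we have
$$
\int w_\varepsilon L\varphi=\int w\,L(\varphi*\eta_\varepsilon)=0.
$$
Thus $Lw_\varepsilon=0$ in $B_{1-\varepsilon}$ in the distributional sense. The function $w_\varepsilon$ is $C^\infty$ and lies in $L_\sigma$, with $\|w_\varepsilon\|_{L_\sigma}\le C\|w\|_{L_\sigma}$ uniformly for $\varepsilon<1/2$; this follows because $(1+|y|^{n+\sigma})^{-1}$ is comparable under shifts of size at most $1$. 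For smooth functions in $L_\sigma$ the principal value $Lw_\varepsilon$ is defined pointwise, and integrating by parts shows $Lw_\varepsilon=0$ pointwise in $B_{1-\varepsilon}$. Hence $w_\varepsilon\in H^{\sigma/2}(B_\rho)\cap L_\sigma$ for every $\rho<1-\varepsilon$, and $w_\varepsilon$ is a weak solution there.

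Next we want a uniform estimate of the form
$$
\|w_\varepsilon\|_{C^\gamma(B_r)}\le C\|w_\varepsilon\|_{L_\sigma}.
$$
Applying \Cref{p2.1} directly only yields $\gamma<1$, and it also brings in $\|w_\varepsilon\|_{L^2}$. To avoid both limitations we will not use it; instead we invoke the known interior estimate for the homogeneous equation, \cite{ROS16}: $\|v\|_{C^\gamma(B_{1/2})}\le C\|v\|_{L^1_{w}}$ for $\gamma<\sigma$, via a cutoff argument. Concretely, let $\chi$ be a cutoff equal to $1$ on $B_{\rho}$ and split $w_\varepsilon=\chi w_\varepsilon+(1-\chi)w_\varepsilon$. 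Then in $B_{\rho'}$, with $\rho'<\rho$, we have
$$
L(\chi w_\varepsilon)=-L((1-\chi)w_\varepsilon)=:g,
$$
where
$$
g(x)=\int(1-\chi)w_\varepsilon(x+y)\,a\!\left(\tfrac{y}{|y|}\right)|y|^{-n-\sigma}\,dy .
$$
This $g$ is smooth in $B_{\rho'}$, and every derivative of it is bounded by $C\|w_\varepsilon\|_{L_\sigma}$, since the kernel is evaluated away from its singularity. The function $\chi w_\varepsilon$ has compact support, so we can write $\chi w_\varepsilon=\Phi*(L(\chi w_\varepsilon))$, using $L\Phi=\delta_0$.

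The remaining difficulty is controlling $\chi w_\varepsilon$ in $B_\rho$ by $\|w_\varepsilon\|_{L_\sigma}$. We plan to get this from an $L^\infty$ bound, obtained with the comparison principle and a barrier. The alternative is to use the Hölder bootstrap of \cite{ROS16} for $L^\infty$ right-hand sides, $\|v\|_{C^\gamma(B_{1/2})}\le C(\|v\|_{L^\infty(B_1)}+\|v\|_{L_\sigma}+\|Lv\|_{L^\infty})$, and then remove the $L^\infty$ term through the local $L^1\to L^\infty$ estimate, i.e. a mean value type bound derived from the Poisson kernel of $L$ on a ball. Iterating over shrinking radii gives the full range $\gamma<\sigma$ with $\gamma\ne1$; the exclusion of $\gamma=1$ comes from the $C^{1}$ versus $C^{0,1}$ issue in that bootstrap. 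We expect this step, converting an $L_\sigma$ bound into a local sup bound without assuming $H^{\sigma/2}$, to be the main obstacle. The first thing we will try is a comparison with the Poisson kernel representation of $L$ on $B_\rho$, averaged over $\rho\in(r,1)$.

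Finally, $w_\varepsilon\to w$ in $L^1_{\loc}$, and the bounds $\|w_\varepsilon\|_{C^\gamma(B_r)}\le C\|w\|_{L_\sigma}$ are uniform in $\varepsilon$. By Arzelà–Ascoli, $w$ has a representative in $C^\gamma(B_r)$ satisfying the stated bound. Taking $\gamma>\sigma/2$ when $\sigma<1$ gives $H^{\sigma/2}_{\loc}$ from the Hölder bound plus the tail control. When $\sigma\ge1$ we instead choose $\gamma\in(\sigma/2,\sigma)$ with $\gamma\ne1$. In both cases $w\in H^{\sigma/2}_{\loc}(B_1)$.
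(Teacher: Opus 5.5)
There is a genuine gap, and it is the step you yourself flag as the ``main obstacle.'' You never prove the uniform bound $\|w_\varepsilon\|_{L^\infty(B_s)}\le C\|w\|_{L_\sigma}$, and nothing else in the argument closes without it.

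\begin{itemize}
\item The estimate you first attribute to \cite{ROS16}, $\|v\|_{C^\gamma(B_{1/2})}\le C\|v\|_{L^1_w}$, is not what that paper gives: its interior estimates carry an $L^\infty$ norm of $u$ on the right. For smooth $v$ this estimate is essentially the lemma itself, so it cannot be cited.
\item The representation $\chi w_\varepsilon=\Phi*L(\chi w_\varepsilon)$ does not help. You control $L(\chi w_\varepsilon)=g$ only in $B_{\rho'}$. In the transition region of $\chi$, $L(\chi w_\varepsilon)$ depends on the local regularity of $w_\varepsilon$, which is not bounded uniformly in $\varepsilon$; that control is exactly what you are trying to prove.
\item A barrier/comparison argument cannot do the job. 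Comparison bounds a supremum by suprema of exterior data and right-hand side. Here it only gives $\sup_{B_{\rho'}}|w_\varepsilon|\le\sup_{B_\rho\setminus B_{\rho'}}|w_\varepsilon|+C\|w\|_{L_\sigma}$, an $L^\infty\to L^\infty$ statement, not $L^1\to L^\infty$.
\item The Poisson-kernel mean-value formula on $B_\rho$, averaged in $\rho$, does work for $(-\Delta)^{\sigma/2}$, where the kernel is explicit. For a merely bounded measurable $a$ on $\mathbb{S}^{n-1}$ there is no explicit kernel. You would need two-sided Poisson kernel estimates for the ball, which you neither state nor prove.
\end{itemize}

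The missing idea is elementary and uses the tool you discarded. Since $f=0$, you may apply \Cref{p2.1} with an artificial $\lambda$, e.g.\ $\lambda=n-\frac{\sigma-\gamma}{2}$. For the smooth weak solution $w_\varepsilon$ and $s<t$ it gives $\|w_\varepsilon\|_{L^\infty(B_s)}\le C(t-s)^{-n}\bigl(\|w_\varepsilon\|_{L^2(B_t)}+\|w\|_{L_\sigma}\bigr)$. The $L^2$ term you objected to is harmless, because $\|w_\varepsilon\|_{L^2(B_t)}\le\|w_\varepsilon\|_{L^\infty(B_t)}^{1/2}\|w_\varepsilon\|_{L^1(B_t)}^{1/2}$ and $\|w_\varepsilon\|_{L^1(B_t)}\le C\|w\|_{L_\sigma}$. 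Young's inequality then gives $\|w_\varepsilon\|_{L^\infty(B_s)}\le\theta\|w_\varepsilon\|_{L^\infty(B_t)}+C_\theta(t-s)^{-2n}\|w\|_{L_\sigma}$.

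Iterate this along radii $r_k\uparrow$ with geometrically shrinking gaps, choosing $\theta 2^{2n}<1$; this absorbs the first term. Two points matter here. The constant must depend polynomially on $(t-s)^{-1}$ so the resulting series converges. The mollification is what guarantees $\|w_\varepsilon\|_{L^\infty(B_{r_k})}<\infty$, so that $\theta^k\|w_\varepsilon\|_{L^\infty(B_{r_k})}\to0$.

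With this uniform sup bound, your remaining steps go through:
\begin{itemize}
\item the $C^\gamma$ bound for $\gamma<\min\{1,\sigma\}$ from \Cref{p2.1};
\item for $1<\gamma<\sigma$, from \Cref{p2.2} or your cutoff argument combined with \cite{ROS16};
\item then Arzel\`a--Ascoli, and a choice $\gamma>\sigma/2$ for $H^{\sigma/2}_{\loc}$.
\end{itemize}
The same absorption also works if you interpolate $L^\infty$ between $C^\gamma$ and $L^1$ instead of between $L^\infty$ and $L^1$.
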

\begin{proof}
We first mollify $w$ and derive estimates that are uniform in the mollification parameter. More precisely, let $\eta\ge0$ be such that $\eta\in C_0^\infty(\R^n)$, $\supp\eta\subset B_1$ and
$$
\int_{\R^n}\eta(x)\,dx=1.
$$
Set 
$$
\eta_\varepsilon(x):=\frac{1}{\varepsilon^n}\eta\left(\frac{x}{\varepsilon}\right)
$$
and define
$$
w_\varepsilon(x):=(w*\eta_\varepsilon)(x):=\int_{\R^n}w(z)\eta_\varepsilon(x-z)\,dz=\int_{\R^n}\eta_\varepsilon(z)w(x-z)\,dz.
$$
Then
$$
|w_\varepsilon(x)|\le\|\eta_\varepsilon\|_\infty\int_{B_\varepsilon(x)}|w(z)|\,dz
$$
and $w_\varepsilon\in C^\infty(\R^n)$. There exists a constant, depending only on $n+\sigma$, such that if $|z|\le1$, then
$$
\frac{1}{1+|y+z|^{n+\sigma}}\le\frac{C}{1+|y|^{n+\sigma}}.
$$
Since $\eta_\varepsilon\ge0$, using Fubini's theorem, the change of variable $y=x-z$ and the previous inequality, we obtain
\begin{align}\label{weighted norm estimate}
    \|w_\varepsilon\|_{L_\sigma}&\le\int_{\R^n}\int_{\R^n}\frac{\eta_\varepsilon(z)|w(x-z)|}{1+|x|^{n+\sigma}}\,dz\,dx\nonumber\\
    &=\int_{\R^n}\eta_\varepsilon(z)\left[\int_{\R^n}\frac{|w(y)|}{1+|y+z|^{n+\sigma}}\,dy\right]\,dz\nonumber\\
    &\le C\|w\|_{L_\sigma}<\infty.
\end{align}
Furthermore,
\begin{equation}\label{equation for w varepsilon}    Lw_\varepsilon=0\quad\text{in}\quad B_{1-\varepsilon},
\end{equation}
in the sense of distributions. Also, since $w_\varepsilon$ is smooth (in particular, Lipschitz), for any $B\subset B_{1-\varepsilon}$, one has
\begin{align*}
    [w_\varepsilon]_{H^{\sigma/2}(B)}^2&:=\int_B\int_B\frac{|w_\varepsilon(x)-w_\varepsilon(y)|^2}{|x-y|^{n+\sigma}}\,dx\,dy\\
    &\le C|B|\int_0^{2(1-\varepsilon)}r^{1-\sigma}\,dr<\infty,
\end{align*}
and hence, $w_\varepsilon\in H^{\sigma/2}_{\loc}(B_{1-\varepsilon})$. Also, by the Dominated Convergence Theorem, $w_\varepsilon$ is, in fact, a weak solution of \eqref{equation for w varepsilon}. Hence, for each $0<\gamma<\min\{1,\sigma\}$, we can apply \Cref{p2.1} with 
$$
\lambda:=n-\frac{\sigma-\gamma}{2}
$$
and $0<s<t<1-\varepsilon$, yielding, for a $C>0$, depending only on $\gamma$, $s$ and $t$,
\begin{equation}\label{w varepsilon esimate}
    \|w_\varepsilon\|_{C^\gamma(B_s)} \leq C\left(\|w_\varepsilon\|_{L^2(B_t)}+\|w_\varepsilon\|_{L_\sigma}\right).
\end{equation}
For $r\in(0,1)$, set
$$
s:=\frac{1+r}{2},\quad t:=\frac{3+r}{4},
$$
and take $\varepsilon<\frac{1-r}{8}$. Using \eqref{weighted norm estimate}, from \eqref{w varepsilon esimate} we get
\begin{equation}\label{L infty estimate}
    \|w_\varepsilon\|_{L^\infty(B_s)} \leq C(t-s)^{-n}\left(\|w_\varepsilon\|_{L^2(B_t)}+\|w\|_{L_\sigma}\right).
\end{equation}
Employing \eqref{weighted norm estimate} one more time, we obtain
$$
\int_{B_t}|w_\varepsilon|^2\,dx\le\|w_\varepsilon\|_{L^\infty(B_t)}\int_{B_t}|w_\varepsilon|\,dx\le C\|w_\varepsilon\|_{L^\infty(B_t)}\|w\|_{L_\sigma},
$$
and therefore,
$$
    \|w_\varepsilon\|_{L^2(B_t)}\le C\|w_\varepsilon\|_{L^\infty(B_t)}^{1/2}\|w\|_{L_\sigma}^{1/2}.
$$
The latter, combined with \eqref{L infty estimate}, gives
\begin{equation}\label{L infty estimate2}
    \|w_\varepsilon\|_{L^\infty(B_s)} \leq C(t-s)^{-n}\left(\|w_\varepsilon\|_{L^\infty(B_t)}^{1/2}\|w\|_{L_\sigma}^{1/2}+\|w\|_{L_\sigma}\right).
\end{equation}
Fix any $\theta\in(0,1)$. Since for any $a,b\ge0$ one has 
$$
ab\le\theta b^2+\frac{a^2}{4\theta},
$$
using \eqref{L infty estimate2}, we arrive at
$$
\|w_\varepsilon\|_{L^\infty(B_s)} \leq \theta \|w_\varepsilon\|_{L^\infty(B_t)}+C_\theta(t-s)^{-2n}\|w\|_{L_\sigma},
$$
where $C_\theta>0$ is a constant depending on $C>0$ and $\theta>0$. Choosing $\theta=2^{-(2n+1)}$, and setting 
$$
r_k:=\frac{3+r}{4}-\frac{1-r}{4}2^{-k},\quad k\ge0,
$$
leads to
$$
\|w_\varepsilon\|_{L^\infty(B_s)}\le\theta^k\|w_\varepsilon\|_{L^\infty(B_{r_k})}+C_\theta(1-r)^{-2n}\|w\|_{L_\sigma}2^{6n}\sum_{j=0}^{k-1}(\theta2^{2n})^j.
$$
The first term in the last estimate tends to zero as $k\to\infty$, and the second term is bounded, since $\theta2^{2n}=\frac12$. Thus, letting $k\to\infty$, we conclude
\begin{equation}\label{L infty estimate3}
    \|w_\varepsilon\|_{L^\infty(B_s)}\le C\|w\|_{L_\sigma},
\end{equation}
for a constant $C>0$ depending only on $\sigma$, $\Lambda$, $r$ and $n$. Since $B_r\subset B_s$, using \eqref{w varepsilon esimate}, estimating the $L^2-$norm on the right-hand side by the uniform estimate \eqref{L infty estimate3}, we conclude 
$$
\|w_\varepsilon\|_{C^\gamma(B_r)}\le C\|w\|_{L_\sigma},
$$
where $C>0$ does not depend on $\varepsilon>0$. Passing to the limit as $\varepsilon\to0$, we arrive at
$$
\|w\|_{C^\gamma(B_r)}\le C\|w\|_{L_\sigma},
$$
for any $0<\gamma<\min\{1,\sigma\}$. 

\medskip

The desired estimate for $1<\gamma<\sigma$ holds as follows. Since $w_\varepsilon\in H_{\loc}^{\sigma/2}(B_{1-\varepsilon})\cap L_\sigma$ is a weak solution of \eqref{equation for w varepsilon}, we apply \Cref{p2.2} with 
$$
\lambda:=n+\gamma-\sigma,
$$
obtaining
\begin{align*}
    \|w_\varepsilon\|_{C^\gamma(B_r)}&\le C\left(\|w_\varepsilon\|_{L^2(B_s)}+\|w_\varepsilon\|_{L_\sigma}\right)\\
    &\le C\left(|B_s|^{1/2}\|w_\varepsilon\|_{L^\infty(B_s)}+\|w_\varepsilon\|_{L_\sigma}\right).
\end{align*}
Passing to the limit as $\varepsilon\to0$, and recalling \eqref{L infty estimate3}, we obtain the desired estimate.

\medskip

Thus, it remains to show that $w\in H^{\sigma/2}_{\loc}(B_1)$. For that purpose, choose
$$
\frac\sigma2<\gamma<\min\{1,\sigma\}.
$$
Since
$$
|w(x)-w(y)|\le C|x-y|^\gamma,
$$
then
\begin{align*}
    [w]_{H^{\sigma/2}(B)}^2&=\int_B\int_B\frac{|w(x)-w(y)|^2}{|x-y|^{n+\sigma}}\,dx\,dy\\
    &\le C|B|\int_0^2r^{2\gamma-\sigma-1}\,dr<\infty,
\end{align*}
i.e., $w\in H^{\sigma/2}_{\loc}(B_1)$.
\end{proof}

\section{Regularity for nonlocal equations}\label{s4}
In this section, we prove our main result. Hereafter, we extend $f$ by zero outside of $B_1$ if needed. Unless specified otherwise, $C>0$ is a universal constant, depending only on $\sigma$, $\lambda$, $\Lambda$ and $n$. We start by proving the following auxiliary result.
\begin{lemma}\label{auxiliary lemma}
    Let $\sigma\in(0,2)$, $x\in B_1$, $r\in(0,1)$,  and $\sigma+\lambda-n>m\ge0$. If $f\in L^{1,\lambda}(B_1)$, then
    $$
    \int_{B_r(x)}|x-y|^{\sigma-n-m}|f(y)|\,dy \leq C\|f\|_{L^{1,\lambda}(B_1)}r^{\sigma+\lambda-n-m},
    $$
    for a constant $C>0$ depending only on $\sigma$, $\lambda$, $m$ and $n$.
\end{lemma}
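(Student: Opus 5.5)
The plan is a standard dyadic decomposition of $B_r(x)$ into annuli around $x$, combined with the Morrey bound on each annulus. The exponent $\sigma-n-m$ is negative, since $m\ge0$ and $\sigma+\lambda-n>m$ with $\lambda<n$ force $\sigma-m>n-\lambda>0$ while $\sigma-m<\sigma<2$... more to the point, the kernel $|x-y|^{\sigma-n-m}$ is largest near $x$. Its size on each annulus is therefore controlled by the inner radius.

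Set $A_k:=\{y:\,2^{-k-1}r\le|x-y|<2^{-k}r\}$ for $k\ge0$. Then $B_r(x)\setminus\{x\}=\bigcup_{k\ge0}A_k$, and the single point $x$ has measure zero. Recall that $f$ is extended by zero outside $B_1$, so
$$
\int_{A_k}|f|\le\int_{B_1\cap B_{2^{-k}r}(x)}|f|\le\|f\|_{L^{1,\lambda}(B_1)}(2^{-k}r)^\lambda .
$$
This holds because $x\in B_1$ and $2^{-k}r<1<\diam(B_1)=2$, so the radius is admissible in the definition of the Morrey norm.

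On $A_k$ we have $|x-y|\ge2^{-k-1}r$. Since $\sigma-n-m<0$, this gives $|x-y|^{\sigma-n-m}\le 2^{n+m-\sigma}(2^{-k}r)^{\sigma-n-m}$. Combining the two bounds,
$$
\int_{B_r(x)}|x-y|^{\sigma-n-m}|f(y)|\,dy\le 2^{n+m-\sigma}\|f\|_{L^{1,\lambda}(B_1)}\,r^{\sigma+\lambda-n-m}\sum_{k\ge0}2^{-k(\sigma+\lambda-n-m)}.
$$
The geometric series converges precisely because $\sigma+\lambda-n-m>0$. Its sum is $(1-2^{-(\sigma+\lambda-n-m)})^{-1}$, which depends only on $\sigma,\lambda,m,n$, as the statement requires.

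There is no real obstacle. The only points needing care are:
\begin{enumerate}
\item confirming the sign of the exponent $\sigma-n-m$, which is what lets us bound the kernel by the inner radius;
\item checking that the radii $2^{-k}r$ are admissible in the Morrey norm, since the center $x$ lies in $B_1$ but the balls may stick out of $B_1$; this is handled by the zero extension of $f$.
\end{enumerate}
The assumption $\sigma+\lambda-n>m$ is used exactly once, for the summability of the series.
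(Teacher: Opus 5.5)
Your proposal is correct and follows the paper's proof essentially verbatim: dyadic annuli $2^{-k-1}r\le|x-y|<2^{-k}r$ around $x$, the kernel bounded by its value at the inner radius, the Morrey bound on $B_{2^{-k}r}(x)$, and summation of the geometric series $\sum_{k\ge0}2^{-k(\sigma+\lambda-n-m)}$. Your justification that $\sigma-n-m<0$ trails off, but the sign follows directly from $\sigma<2\le n$ and $m\ge0$.
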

\begin{proof}
    Decompose
    $$
    B_r(x)=\bigcup_{j=0}^\infty A_j,
    $$
    where
    $$
    A_j:=\{2^{-j-1}r<|x-y|\le 2^{-j}r\}.
    $$
    Then, in $A_j$, one has
    \begin{align*}
        |x-y|^{\sigma-n-m}&<(2^{-j-1}r)^{\sigma-n-m}\\
        &=2^{m+n-\sigma}(2^{-j}r)^{\sigma-n-m}\\
        &=:C(2^{-j}r)^{\sigma-n-m}.
    \end{align*}
    Also,
    $$
    \int_{A_j}|f|\,dy\le\int_{B_{2^{-j}r}(x)}|f|\,dy\le(2^{-j}r)^\lambda\|f\|_{L^{1,\lambda}(B_1)}.
    $$
    Therefore,
    $$
    \int_{A_j}|x-y|^{\sigma-n-m}|f(y)|\,dy \leq C(2^{-j}r)^{\alpha-m}\|f\|_{L^{1,\lambda}(B_1)},   
    $$
    where $\alpha$ is defined by \eqref{definition of alpha}. Since $\alpha-m>0$,
    $$
    \sum_{j=0}^\infty2^{-j(\alpha-m)}<\infty,
    $$
    and the result follows.
\end{proof}
\begin{remark}
    Extending $f$ by $0$ outside of $B_1$, we deduce the conclusion of \Cref{auxiliary lemma} for any $r>0$ and $x\in\R^n$.
\end{remark}

We are now ready to prove the main result.

\begin{theorem}\label{main result}
    Let $\sigma\in(0,2)$ and $u\in L_\sigma$ be a distributional solution of 
    $$
    Lu=f\quad\text{ in }\quad B_1,
    $$
    where $f\in L^{1,\lambda}(B_1)$, and $n-\sigma<\lambda<n$. For every $\rho\in(0,1)$, there exists $C>0$, depending only on $\sigma$, $\lambda$, $\Lambda$, $\rho$ and $n$, such that
    \begin{itemize}
        \item If $\sigma+\lambda-n\neq1$, then         
        \begin{equation}\label{Holder estimate}
            \|u\|_{C^{\sigma+\lambda-n}(B_\rho)}\le C\left(\|u\|_{L_\sigma}+\|f\|_{L^{1,\lambda}(B_1)}\right).
        \end{equation}      
        \item If $\sigma+\lambda-n=1$, then $u$ is in the Zygmund class, i.e.,
        \begin{equation}\label{Zygmund estimate}
        \|u\|_{L^\infty(B_\rho)}+[u]_{*(B_\rho)}\le C\left(\|u\|_{L_\sigma}+\|f\|_{L^{1,\lambda}(B_1)}\right).
        \end{equation}        
    \end{itemize}    
\end{theorem}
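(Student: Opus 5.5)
Fix $\rho\in(0,1)$ and an intermediate radius $R:=(1+\rho)/2$. The plan is to split $u$ into a Newtonian-type potential of the localized datum plus a solution of the homogeneous equation. Set $g:=f\chi_{B_R}$, extended by zero, and define
$$
v(x):=\int_{\R^n}\Phi(x-y)g(y)\,dy .
$$
By \eqref{derivative estimate} with $\beta=0$ and \Cref{auxiliary lemma} with $m=0$ (applied over a large ball, using that $g$ vanishes outside $B_1$), $v$ is locally bounded and has decay $|v(x)|\lesssim|x|^{\sigma-n}\|f\|_{L^{1}}$ at infinity. Hence $v\in L_\sigma$ and $\|v\|_{L_\sigma}\le C\|f\|_{L^{1,\lambda}(B_1)}$. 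Since $L\Phi=\delta_0$ and $L$ is symmetric ($a$ is even), Fubini gives $Lv=g$ in the distributional sense. Then $w:=u-v\in L_\sigma$ solves $Lw=0$ in $B_R$. By \Cref{important lemma on homogeneous}, rescaled to $B_R$, $w\in C^{\gamma}(B_\rho)$ for any non-integer $\gamma<\sigma$, with norm bounded by $C(\|u\|_{L_\sigma}+\|f\|_{L^{1,\lambda}})$. Since $\alpha=\sigma+\lambda-n<\sigma$, we can choose $\gamma\in(\alpha,\sigma)\setminus\{1\}$. This controls $w$ in $C^\alpha(B_\rho)$ and also in the Zygmund seminorm when $\alpha=1$, because then $\gamma>1$ and $w\in C^{1,\gamma-1}$. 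Everything therefore reduces to estimating $v$.

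\textbf{Case $0<\alpha<1$.} For $x,z\in B_\rho$ with $d:=|x-z|$, split the integral defining $v(x)-v(z)$ into the region $B_{2d}(x)$ and its complement. On $B_{2d}(x)$, bound $|\Phi(x-y)|$ and $|\Phi(z-y)|$ separately by \eqref{derivative estimate}. Since $B_{2d}(x)\subset B_{3d}(z)$, \Cref{auxiliary lemma} with $m=0$ gives the bound $Cd^{\alpha}$. On the complement we have $|x-y|\ge 2d$, so \Cref{r2.2} (with $\delta$ chosen in $(\alpha,\min\{1,2\sigma\})$, possible since $\alpha<\sigma$) gives
$$
|\Phi(x-y)-\Phi(z-y)|\le Cd^\delta|x-y|^{\sigma-n-\delta}.
$$
A dyadic decomposition $\{2^kd\le|x-y|<2^{k+1}d\}$ together with the Morrey bound gives $d^\delta\sum_k(2^kd)^{\alpha-\delta}\le Cd^\alpha$, since $\delta>\alpha$.

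\textbf{Case $\alpha\ge1$.} This forces $\sigma>1$, so $2\sigma>2$ and \eqref{derivative estimate} holds for $|\beta|\le2$. First, \Cref{auxiliary lemma} with $m=1<\alpha$ justifies differentiating under the integral, giving $\nabla v=\int\nabla\Phi(x-y)g(y)\,dy$.
\begin{itemize}
\item For $1<\alpha<2$, repeat the previous splitting for $\nabla v$. The near part uses $|\nabla\Phi|\le C|\cdot|^{\sigma-n-1}$ and \Cref{auxiliary lemma} with $m=1$. The far part uses the mean value theorem with $|D^2\Phi|\le C|\cdot|^{\sigma-n-2}$, and the dyadic sum converges because $\alpha-1<1$. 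This yields $[\nabla v]_{C^{\alpha-1}}\le Cd^{\alpha-1}$. (Alternatively, since $v\in H^{\sigma/2}_{\loc}$, one could invoke \Cref{p2.2}, but the direct argument is cleaner.)
\item For $\alpha=1$, estimate the second difference $v(x+\xi)+v(x-\xi)-2v(x)$. On $B_{2|\xi|}(x)$, bound the three $\Phi$-terms separately, obtaining $C|\xi|^\alpha=C|\xi|$ by \Cref{auxiliary lemma}. Outside, use a second-order Taylor expansion, $|\Phi(\cdot+\xi)+\Phi(\cdot-\xi)-2\Phi|\le C|\xi|^2|x-y|^{\sigma-n-2}$. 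The dyadic sum gives $|\xi|^2\sum_k(2^k|\xi|)^{\alpha-2}\le C|\xi|$.
\end{itemize}
The $L^\infty$ bounds on $v$ and $\nabla v$ follow from \Cref{auxiliary lemma} with $m=0,1$ and $r=2$.

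The main obstacle I expect is the justification step: showing that the potential $v$ lies in $L_\sigma$ and satisfies $Lv=g$ distributionally for merely integrable $g$. This needs Fubini applied with $L\varphi$, which decays like $|x|^{-n-\sigma}$, combined with the kernel symmetry. A secondary difficulty is the Taylor step when $x-y$ is not much larger than $\xi$. This is handled by taking the far region to be $|x-y|\ge2|\xi|$, so that segments between the points avoid the origin and stay comparable to $|x-y|$.
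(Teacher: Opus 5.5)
Your proposal is correct and follows the paper's proof essentially step for step. You use the same splitting $u=v+w$ with $v=\Phi*(f\chi_{B_R})$, $R=(1+\rho)/2$, control $w$ by \Cref{important lemma on homogeneous} with $\gamma\in(\alpha,\sigma)\setminus\{1\}$, and estimate $v$ with the same near/far dyadic decompositions (via \Cref{r2.2}, the gradient bound and the Hessian bound) in the three regimes $0<\alpha<1$, $\alpha=1$, $1<\alpha<2$. The only blemish is that you announce differentiation under the integral for all $\alpha\ge1$, although it requires $m=1<\alpha$; nothing breaks, since you never use $\nabla v$ in the Zygmund case $\alpha=1$.
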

\begin{proof}
We first prove the desired regularity result in a neighborhood of the origin. Let $\rho\in(0,1)$ and set 
\begin{equation}\label{definition of R and g}
    R:=\frac{1+\rho}{2},\qquad g:=f\chi_{B_R}.
\end{equation}
We then define
    $$
    v(x):=\int_{\R^n}\Phi(x-y)g(y)\,dy,
    $$
    where $\Phi$ is the fundamental solution defined in \Cref{s2}. Then
    $$
    Lv=g\quad\text{ in }\quad\R^n,
    $$
    in the sense of distributions.
    
    \bigskip
    
    \textsc{Step 1.} First, we make sure $v\in L_\sigma$. Indeed, using \eqref{derivative estimate} for $|\beta|=0$, one has
    \begin{equation}\label{upper control}
        |\Phi(x)|\le C|x|^{\sigma-n}.
    \end{equation}
    We write
    \begin{align}\label{4.1}
     \|v\|_{L_\sigma}&=\int_{|y|<10}\frac{|v(y)|}{1+|y|^{n+\sigma}}\,dy+\int_{|y|\ge 10}\frac{|v(y)|}{1+|y|^{n+\sigma}}\,dy\nonumber\\
     &=:I_1+I_2.
    \end{align}
    As
$$
g\in L^1(\R^n)\quad\text{ and }\quad\supp  g\subset \overline{B_R},
$$
using Fubini's theorem and \eqref{upper control}, we estimate
$$
I_1\le\int_{|y|<10}|v(y)|\,dy\le C\int_{B_R}|g(z)|\left(\int_{|y|<10}\frac{dy}{|y-z|^{n-\sigma}}\right)\,dz.
$$
For $z\in B_R$, setting $t:=y-z$ gives
$$
\int_{|y|<10}\frac{dy}{|y-z|^{n-\sigma}}\le\int_{|t|<10+|z|}\frac{dt}{|t|^{n-\sigma}}\le\int_{|t|<20}\frac{dt}{|t|^{n-\sigma}}=:C_1,
$$
and hence,
\begin{equation}\label{I1 estimate}
    I_1\le C_1\|g\|_{L^1(\R^n)}<\infty.
\end{equation}
For $|y|\ge10$, since $|z|<5$ for $z\in B_R$, we have
$$
|y-z|\ge|y|-|z|\ge|y|-5\ge\frac{1}{2}|y|,
$$
therefore,
$$
|v(y)|\le C \int_{B_R}\frac{|g(z)|}{|y-z|^{n-\sigma}}\,dz\le C 2^{n-\sigma} |y|^{\sigma-n}\|g\|_{L^1(\R^n)}=:\frac{C_2}{|y|^{n-\sigma}}.
$$
Thus,
$$
I_2\le C_2\int_{|y|\ge10}\frac{dy}{|y|^{n-\sigma}(1+|y|^{n+\sigma})}\le C_2\int_{|y|\ge10}\frac{dy}{|y|^{2n}}<\infty,
$$
which, combined with \eqref{I1 estimate} and \eqref{4.1}, yields
\begin{equation}\label{Lsigma of v}
    \|v\|_{L_\sigma} \leq C\|f\|_{L^{1,\lambda}(B_1)}<\infty.
\end{equation}
Set $w:=u-v\in L_\sigma$. Then
$$
Lw=0 \quad\text{in}\ B_R.
$$
By \Cref{important lemma on homogeneous}, for any $\gamma\in(0,\sigma)$, $\gamma\neq1$, we have
    $$
    \|w\|_{C^\gamma(B_\rho)}\le C\|w\|_{L_\sigma}.
    $$
    Hence, choosing $\gamma$ close enough to $\sigma$, we obtain higher regularity for $w$ than we intend to prove for $u$. Thus, it is enough to prove the desired regularity for $v$.

\bigskip

\textsc{Step 2.} Note that
$$
0<\alpha<2,
$$
where $\alpha$ is defined by \eqref{definition of alpha}. We consider three cases.

\medskip

\textsc{Case 1.} First, we show the result for
\begin{equation*}
    0<\alpha<1.
\end{equation*}
Take $x,z\in B_\rho$ and set $h:=|x-z|$. Then,
\begin{align}\label{v difference}
    v(x)-v(z)&=\int_{B_R}\left(\Phi(x-y)-\Phi(z-y)\right)f(y)\,dy\nonumber\\
    &=\left[\int_{B_R\cap B_{2h}(x)}+\int_{B_R\setminus B_{2h}(x)}\right]=:J_1+J_2.
\end{align}
Since $B_{2h}(x)\subset B_{4h}(z)$, using \eqref{upper control} and \Cref{auxiliary lemma} for $m=0$, we get, for a constant $C>0$,
\begin{align}\label{J1 estimate}
    |J_1|&\le C\int_{B_{2h}(x)}|x-y|^{\sigma-n}|f(y)|\,dy+C\int_{B_{4h}(z)}|z-y|^{\sigma-n}|f(y)|\,dy\nonumber\\
    &\le C\|f\|_{L^{1,\lambda}(B_1)}h^{\alpha}.
\end{align}
To estimate $J_2$, we first estimate the integral in the annulus 
\begin{equation}\label{Aj definition}
    A_j:=B_R\cap\left\{2^jh<|x-y|\le2^{j+1}h\right\},\quad j\ge1.
\end{equation}
Choosing
    $$
    \alpha<\delta<\min\{1,2\sigma\},
    $$
    and recalling \Cref{r2.2}, we obtain
   \begin{align*}
       \int_{A_j}|\Phi(x-y)-\Phi(z-y)||f(y)|\,dy&\le Ch^\delta(2^jh)^{\sigma-n-\delta}\int_{B_{2^{j+1}h}(x)}|f(y)|\,dy\\
       &\le Ch^\alpha\|f\|_{L^{1,\lambda}(B_1)}2^{j(\alpha-\delta)}.
   \end{align*}
    Therefore,
    \begin{align}\label{J2 estimate}
    |J_2|&\le\int_{B_R\setminus B_{2h}(x)}\left|\Phi(x-y)-\Phi(z-y)\right||f(y)|\,dy\nonumber\\
    &\le\sum_{j\ge1}\int_{A_j}\left|\Phi(x-y)-\Phi(z-y)\right||f(y)|\,dy\nonumber\\
    &\le C\|f\|_{L^{1,\lambda}(B_1)}h^\alpha\sum_{j\ge1}2^{j(\alpha-\delta)}<\infty.
    \end{align}
    Combining \eqref{v difference}, \eqref{J1 estimate} and \eqref{J2 estimate}, we arrive at
    $$
    |v(x)-v(z)|\le C\|f\|_{L^{1,\lambda}(B_1)}|x-z|^{\alpha},
    $$
    i.e., $v\in C^{\alpha}(B_\rho)$. Using \eqref{upper control} and \Cref{auxiliary lemma} with $m=0$, we get 
    $$
    \|v\|_{L^\infty(B_\rho)} \leq C\|f\|_{L^{1,\lambda}(B_1)},
    $$    
    where $C>0$ depends on $\sigma$, $\lambda$, $n$ and $\rho$. By translation, the argument above can be carried out around any fixed $x_0\in B_1$, and hence, $u\in C_{\loc}^{\alpha}(B_1)$, and \eqref{Holder estimate} holds.

    \medskip

\textsc{Case 2.} Assume now that $\alpha=1$. Take $x\in B_\rho$ and $\xi$ such that $x\pm\xi\in B_\rho$. Set
\begin{align}\label{second quotient}             \Delta_\xi^2v(x)&:=\int_{\R^n}\left[\Phi(x-y+\xi)+\Phi(x-y-\xi)-2\Phi(x-y)\right]g(y)\,dy\nonumber\\
&=\int_{|x-y|<2|\xi|}+\int_{|x-y|\ge2|\xi|}=:P+Q.
\end{align}
Observe that $B_{2|\xi|}(x)\subset B_{3|\xi|}(x+\xi)\cap B_{3|\xi|}(x-\xi)$. Using \eqref{upper control} and \Cref{auxiliary lemma} with $m=0$, for balls centered at $x$ and $x\pm\xi$, we obtain
\begin{equation}\label{P estimate}
    |P|\le C\sum_{b\in\{x,\,x\pm\xi\}}\int_{B_{3|\xi|}(b)}|b-y|^{\sigma-n}|g(y)|\,dy \leq C\|f\|_{L^{1,\lambda}(B_1)}|\xi|.
\end{equation}
Since $\sigma>1$, we can take $|\beta|=2$ in \Cref{t2.1} to get
\begin{equation}\label{Phi Hessian estimate}
    |D^2\Phi(\eta)|\le C|\eta|^{\sigma-n-2},\quad\eta\neq0.
\end{equation}
Now, if $\eta:=x-y$ and $|\eta|\ge2|\xi|$, then
$$
|\eta+s\xi|\ge\frac12|\eta|,\quad s\in(-1,1),
$$
and \eqref{Phi Hessian estimate} leads to
\begin{align*}
    |\Phi(\eta+\xi)+\Phi(\eta-\xi)-2\Phi(\eta)|&=\left|\int_{-1}^1(1-|s|)\xi^TD^2\Phi(\eta+s\xi)\xi\,ds\right|\\
    &\le C|\xi|^2|\eta|^{\sigma-n-2}.
\end{align*}
Integrating the latter over
$$
A_j:=\left\{2^j|\xi|\le|x-y|<2^{j+1}|\xi|\right\},\quad j\ge1,
$$
we get
\begin{align*}
    &\int_{A_j}|\Phi(\eta+\xi)+\Phi(\eta-\xi)-2\Phi(\eta)||g(y)|\,dy\nonumber\\
    &\le C|\xi|^2(2^j|\xi|)^{\sigma-n-2}\|f\|_{L^{1,\lambda}(B_1)}(2^{j+1}|\xi|)^\lambda\nonumber\\
    &=C|\xi|\|f\|_{L^{1,\lambda}(B_1)}2^{-j}.
\end{align*}
Hence,
\begin{equation}\label{Q estimate}
    |Q|\le C|\xi|\|f\|_{L^{1,\lambda}(B_1)}\sum_{j\ge1}2^{-j}=C|\xi|\|f\|_{L^{1,\lambda}(B_1)},
\end{equation}
and combining \eqref{second quotient}, \eqref{P estimate} and \eqref{Q estimate}, we obtain
\begin{equation}\label{quotient estimate for v}
    |\Delta_\xi^2v(x)|\le C|\xi|\|f\|_{L^{1,\lambda}(B_1)}.
\end{equation}
On the other hand, \Cref{important lemma on homogeneous} for $\gamma\in(1,\sigma)$ gives
$$
\|w\|_{C^\gamma(B_\rho)}\le C\|w\|_{L_\sigma}\le C\left(\|u\|_{L_\sigma}+\|f\|_{L^{1,\lambda}(B_1)}\right),
$$
and the Mean Value Theorem yields
\begin{align}\label{quotient estimate for w}
    |w(x+\xi)+w(x-\xi)-2w(x)|&\le|w(x+\xi)-w(x)|+|w(x-\xi)-w(x)|\nonumber\\
    \le&2\|Dw\|_{L^\infty(B_\rho)}|\xi|.
\end{align}
Using \eqref{upper control}, \Cref{auxiliary lemma} with $m=0$ and \eqref{Lsigma of v}, we conclude
$$
\|v\|_{L^\infty(\R^n)}+\|v\|_{L_\sigma}\le C\|f\|_{L^{1,\lambda}(B_1)},
$$
which combined with \eqref{quotient estimate for v} and \eqref{quotient estimate for w}, gives \eqref{Zygmund estimate}.

\medskip

\textsc{Case 3.} Finally, we consider the case when
$$
1<\alpha<2.
$$
Note that, in this case, $\sigma>1$, and \eqref{derivative estimate} with $|\beta|=1$ yields
\begin{equation}\label{estimate for gradPhi}
    |D\Phi(x)|\le C|x|^{\sigma-n-1}.
\end{equation}
    To obtain the desired H\"older estimate for $Dv$, we ensure that one can differentiate under the integral. Applying \Cref{auxiliary lemma} with $m=1$, we get
\begin{equation}\label{4.8}
    \int_{B_\rho(x)}|x-y|^{\sigma-n-1}|f(y)|\,dy\le C\|f\|_{L^{1,\lambda}(B_1)}\rho^{\alpha-1},
\end{equation}
i.e., the integral defining the candidate gradient is absolutely convergent. For any unit vector $e$, we aim to show that
\begin{equation}\label{4.9}
    \frac{v(x+te)-v(x)}{t}\longrightarrow \int_{B_R} e\cdot D\Phi(x-y)f(y)\,dy,\quad t\to0,
\end{equation}
where $R>0$ is defined in \eqref{definition of R and g}. Indeed,
\begin{align}\label{4.10}
    &\frac{1}{|t|}\int_{B_{2|t|}(x)}\left|\Phi(x+te-y)-\Phi(x-y)\right||f(y)|\,dy\\
    &\le\frac{C}{|t|}\left[\int_{B_{3|t|}(x+te)}|\Phi(x+te-y)||f(y)|\,dy+\int_{B_{2|t|}(x)}|\Phi(x-y)||f(y)|\,dy\right].\nonumber
\end{align}
Combining \eqref{upper control} and \Cref{auxiliary lemma} with $m=0$, one concludes that both integrals on the right-hand side of \eqref{4.10} are bounded by $C\|f\|_{L^{1,\lambda}(B_1)}|t|^{\alpha}$, and therefore, the right-hand side in \eqref{4.10} is bounded by 
$$
C\|f\|_{L^{1,\lambda}(B_1)}|t|^{\alpha-1}\to0,\quad|t|\to0.
$$
Here we used the fact that $\alpha>1$. Furthermore, in $|x-y|>2|t|$, \eqref{estimate for gradPhi} gives
$$
\frac{1}{|t|}\left|\Phi(x+te-y)-\Phi(x-y)\right|\le C|x-y|^{\sigma-n-1}.
$$
Recalling \eqref{4.8}, the fact that, in $B_R\setminus B_\rho(x)$, one has 
$$
|x-y|^{\sigma-n-1}\le \rho^{\sigma-n-1},
$$
and using the Dominated Convergence Theorem, we conclude that \eqref{4.9} holds.

\medskip

To obtain sharp H\"older regularity for $Dv$, setting $h:=|x-z|$, we split the integral and estimate each term as follows.
\begin{align}\label{Dv estimate}
    &|Dv(x)-Dv(z)|\nonumber\\
    &\le\int_{B_R}\left|D\Phi(x-y)-D\Phi(z-y)\right||f(y)|\,dy\nonumber\\
    &=\left[\int_{B_R\cap B_{2h}(x)}+\int_{B_R\setminus B_{2h}(x)}\right]=:A+B.
\end{align}
Combining \eqref{estimate for gradPhi} with the fact that $B_{2h}(x)\subset B_{4h}(z)$, and recalling \eqref{4.8}, we estimate
\begin{align}\label{A estimate}
    |A|&\le C\int_{B_{2h}(x)}|x-y|^{\sigma-n-1}|f(y)|\,dy+C\int_{B_{4h}(z)}|z-y|^{\sigma-n-1}|f(y)|\,dy\nonumber\\
    &\le C\|f\|_{L^{1,\lambda}(B_1)}h^{\alpha-1}.
\end{align}
To estimate $B$, observe that, for $|x-y|>2h$ and $\xi\in[x,z]$, one has
$$
|\xi-y|\ge\frac{1}{2}|x-y|.
$$
Using the Mean Value Theorem and \eqref{Phi Hessian estimate}, we arrive at
$$
|D\Phi(x-y)-D\Phi(z-y)|\le Ch|x-y|^{\sigma-n-2}.
$$
Hence, if $A_j$ is defined by \eqref{Aj definition}, then
\begin{align*}
    &\int_{A_j}|D\Phi(x-y)-D\Phi(z-y)||f(y)|\,dy\\
    &\le C\|f\|_{L^{1,\lambda}(B_1)}h(2^jh)^{\sigma-n-2}(2^{j+1}h)^\lambda\\
    &\le C\|f\|_{L^{1,\lambda}(B_1)}h^{\alpha-1}2^{j(\alpha-2)}.
\end{align*}
Therefore,
\begin{equation}\label{B estimate}
    |B|\le C\|f\|_{L^{1,\lambda}(B_1)}h^{\alpha-1}\sum_{j\ge1}2^{j(\alpha-2)}<\infty,
\end{equation}
where the last inequality follows from $\alpha<2$. Combining \eqref{Dv estimate}, \eqref{A estimate} and \eqref{B estimate}, we get
$$
|Dv(x)-Dv(z)|\le C\|f\|_{L^{1,\lambda}(B_1)}|x-z|^{\alpha-1}.
$$
Thus, $v\in C^{1,\alpha-1}(B_\rho)$, and therefore, $u$ enjoys the same regularity. Additionally, recalling \eqref{estimate for gradPhi} and using \Cref{auxiliary lemma} with $m=1$, we get
$$
\|Dv\|_{L^\infty(B_\rho)} \leq C\|f\|_{L^{1,\lambda}(B_1)},
$$
for a constant $C>0$ depending only on $\sigma$, $\lambda$, $\Lambda$, $n$ and $\rho$. As before, by translation, the argument can be carried out around any fixed $x_0\in B_1$, and hence, $u\in C_{\loc}^{\alpha}(B_1)$, and \eqref{Holder estimate} holds.
\end{proof}

\section{Examples and beyond}\label{s5}
In this section, we present examples that demonstrate the sharpness of the conditions. To start, observe first that the Zygmund class is a subset of log-Lipschitz continuous functions; therefore, for $\sigma+\lambda-n=1$, \Cref{main result} implies that solutions of \eqref{main equation} are locally log-Lipschitz continuous. Indeed, fix $x,z\in B_\rho$, $\rho\in(0,1)$, set 
$$
\rho<R:=\frac{1+\rho}{2}<1,\quad h:=z-x,
$$
and for $0<|h|<\frac{1-\rho}{8}$, choose $k\ge1$ such that
    $$
    \frac{1-\rho}{8}\le2^k|h|<\frac{1-\rho}{4}.
    $$
    Define
    $$
    u_h:=u(x+h)-u(x).
    $$
    Note that $x+2^jh\in B_R$, for any $j\in\{0,1,\ldots,k\}$, and
    $$
    2^{-k}\le\frac{8|h|}{1-\rho}\quad\text{and}\quad k\le C\left(1+\left|\log|h|\right|\right),
    $$    
    where $C>0$ depends on $\rho$. Using \eqref{Zygmund estimate} and iterating, one gets
    \begin{align*}
        |u_h|&=\frac12\left|u_{2h}-[u(x+2h)+u(x)-2u(x+h)]\right|\\
        &\le2^{1-k}\|u\|_{L^\infty(B_R)}+C\frac{k|h|}{2}\left(\|u\|_{L_\sigma}+\|f\|_{L^{1,\lambda}(B_1)}\right)\\
        &\le C\left(\|u\|_{L^\infty(B_R)}+\|u\|_{L_\sigma}+\|f\|_{L^{1,\lambda}(B_1)}\right)|h|\left(1+\left|\log|h|\right|\right).
    \end{align*}
    For $|h|\ge\frac{1-\rho}{8}$, the same estimate (with a constant depending on $\rho$) follows from $|u_h|\le2\|u\|_\infty$. Thus, for any $x,z\in B_\rho$ with 
    $$
    0<|x-z|\le\frac12,
    $$
    there exists a constant $C>0$, depending only on $\rho$, $\sigma$, $\Lambda$ and $n$, such that
    $$
    |u(x)-u(z)|\le C\left(\|u\|_{L_\sigma}+\|f\|_{L^{1,\lambda}(B_1)}\right)|x-z|\left(1+|\log|x-z||\right).
    $$        
    In particular,
    $$
    u\in C^{1^-}_{\loc}(B_1).
    $$

    \medskip
    
    \noindent \textbf{Example 1.} The following example shows that for $\sigma+\lambda-n=1$, solutions of \eqref{main equation} need not be Lipschitz continuous, yielding that the conclusion of \Cref{main result} is optimal.
    Let $\sigma\in(1,2)$,
    $$
    L:=(-\Delta)^{\sigma/2}\quad\text{and}\quad f(y):=\frac{y_1}{|y|^\sigma}\chi_{B_1}(y)\in L^{1,n+1-\sigma}(B_1).
    $$
    A solution of \eqref{main equation} is given in terms of the Riesz potential (see, for example, \cite{PST25, T26})
    $$
    u(x)=c_{n,\sigma}\int_{B_1}\frac{y_1}{|y|^\sigma|x-y|^{n-\sigma}}\,dy,
    $$
    where $c_{n,\sigma}>0$ is a constant depending only on $n$ and $\sigma$. Note that $u(0)=0$, and for $t>0$,
    $$
    u(te_1)=c_0t\log\left(\frac{1}{t}\right)+O(t),
    $$
    where $c_0:=c_{n,\sigma}|\mathbb{S}^{n-1}|(n-\sigma)/n>0$. Therefore,
    $$
    \frac{u(te_1)-u(0)}{t}\longrightarrow \infty,\quad\text{as}\quad t\to0,
    $$
    and hence $u$ is not Lipschitz.

    \medskip

    \noindent \textbf{Example 2.} The next example demonstrates the optimality of the H\"older regularity exponent $\sigma+\lambda-n$. Let $\alpha\in(0,\sigma)$, and $\alpha\neq1$ be defined by \eqref{definition of alpha}. Set 
    $$
    u(x):=\zeta(x)|x|^\alpha,
    $$
    where $\zeta\in C_0^\infty(B_1)$ and is identically $1$ on $B_{1/2}$. It is easy to check that if $(-\Delta)^{\sigma/2}u=:f$ in $B_1$, then
    $$
    |f(x)|\le C(1+|x|^{\alpha-\sigma}).
    $$
    Hence, $f\in L^{1,\lambda}(B_1)$. If $\alpha<1$, then $u(te_1)=|t|^\alpha$ in $B_{1/2}$ rules out every larger H\"older exponent below $1$. If $\alpha>1$, then $\partial_1u(te_1)=\alpha\,\text{sgn}(t)|t|^{\alpha-1}$ in $B_{1/2}$ excludes higher H\"older regularity for the gradient.

    \medskip

    \noindent \textbf{Example 3.} The established regularity is optimal in the sense that the condition $n-\sigma<\lambda$ cannot be relaxed. Indeed, if $\lambda=n-\sigma$ and 
    $$
    f(x):=|x|^{\lambda-n}\chi_{B_{1}}(x),
    $$
    then $f\in L^{1,\lambda}(B_1)$. Set 
    $$
    u:=(-\Delta)^{-\sigma/2}f.
    $$
    Then $u\in L_\sigma$ and
    $$
    (-\Delta)^{\sigma/2}u=f
    $$
    in the sense of distributions. However,
    $$
    u(x)\sim\log\frac{1}{|x|},\quad\text{ as }|x|\to0,
    $$
    i.e., $u$ is not even locally bounded. For $0<\lambda<n-\sigma$, the potential has a power singularity, as
    $$
    u(x)\sim|x|^{\sigma+\lambda-n},\quad\text{ as }\quad|x|\to0.
    $$

\bigskip

{\small \noindent{\bf Acknowledgments.} 
This publication is based upon work supported by King Abdullah University of Science and Technology (KAUST) under Award No. ORFS-CRG12-2024-6430.}

\end{document}